\documentclass[12pt]{article}
\usepackage{amsmath, amsfonts, amsthm, amssymb, epsfig}

\newtheorem{theorem}{Theorem}
\newtheorem{lemma}{Lemma}

\newtheorem{proposition}{Proposition}

\begin{document}
\title{{\bf Properties of proper rational numbers}}
\author{Konstantine Zelator\\
Mathematics, Statistics, and Computer Science\\
212 Ben Franklin Hall\\
Bloomsburg University of Pennsylvania\\
400 East 2nd Street\\
Bloomsburg, PA  17815\\
USA\\
and\\
P.O. Box 4280\\
Pittsburgh, PA  15203\\
kzelator@.bloomu.edu\\
e-mails: konstantine\underline{\ }zelator@yahoo.com}

\maketitle

\section{Introduction}

The set of rational numbers can be thought of as the disjoint union of two of
its main subsets:  the set of integers and the set of proper rationals.

\vspace{.15in}

\noindent {\bf Definition 1:}  A proper rational number is a rational number
which is not an integer.

The aim of this work is simple and direct.  Namely, to explore some of the
basic or elementary properties of the proper rationals.

We will make use of the standard notation $(u,w)$ denoting the greatest common
divisor of two integers $u$ and $w$.  Also, the notation $u|w$ to denote that
$u$ is a divisor of $w$.

\begin{proposition}  Let $r$ be a proper rational number.  Then $r$ can be
  written in the form, $r=\frac{c}{b}$ where $c$ and $b$ are relatively prime
  integers; $(c,b)=1$, and with $b \geq 2$.
\end{proposition}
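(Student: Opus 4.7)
The plan is to begin from the very definition of rationality: since $r$ is rational, there exist integers $m,n$ with $n\neq 0$ such that $r = m/n$. The goal is to massage this representation into one with positive denominator, coprime numerator and denominator, and denominator at least $2$. First I would normalize the sign of the denominator: if $n < 0$, replace the pair $(m,n)$ by $(-m,-n)$; this leaves $r$ unchanged and ensures $n \geq 1$.

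Next I would reduce to lowest terms. Set $d = (m,n)$, which is a positive integer since $n \geq 1$. Define $c = m/d$ and $b = n/d$; both are integers by the definition of $d$. A standard property of the greatest common divisor yields $(c,b) = 1$, and clearly $b \geq 1$. By construction $r = c/b$, so the only remaining requirement is the sharper bound $b \geq 2$.

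The main step, and the only place where the hypothesis that $r$ is proper is used, is ruling out $b = 1$. I would argue by contradiction: if $b = 1$, then $r = c/1 = c$ would be an integer, contradicting Definition~1, which declares $r$ not to be an integer. Combined with $b \geq 1$ established above, this forces $b \geq 2$.

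The main obstacle is essentially conceptual rather than technical: one must be careful to invoke the hypothesis ``$r$ is not an integer'' at exactly the right moment, since without it the argument would only produce $b \geq 1$. Beyond that, the proof is a straightforward combination of sign normalization, division by the gcd, and the elementary fact that $(m/d, n/d) = 1$ when $d = (m,n)$.
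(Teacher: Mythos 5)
Your proposal is correct and follows essentially the same route as the paper: reduce to a representation with positive denominator, divide out the gcd to obtain a coprime pair $(c,b)$, and then use the hypothesis that $r$ is not an integer to exclude $b=1$. The only cosmetic difference is that you normalize the sign by replacing $(m,n)$ with $(-m,-n)$, whereas the paper splits into the cases $r>0$ and $r<0$; both handle the sign issue equally well.
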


\begin{proof}  Since $r$ is a proper rational, it cannot, by definition, be
  zero.  Hence $r = \frac{A}{B}$, for some positive integers $A$ and $B$; if
  $r > 0$.  If, on the other hand, $r < 0$, then $r = -\frac{A}{B}$,  $A,B$
  being positive integers.  Let $d = (A,B)$, then $A =da$, $B = db$, for relative prime positive integers $a$ and $b$.  We have, $r = \frac{A}{B} =
  \frac{da}{db} = \frac{a}{b}$, for $ r > 0$.  Clearly, $b$ cannot equal 1,
  for then $r$ would equal $a$, an integer, contrary to the fact that $r$ is a
  proper rational.  Hence, $b \geq 2$.

If, on the other hand, $r < 0$, $r = \frac{-A}{B} = \frac{-a}{b} =
\frac{c}{b}$ with $c = -a$, and $b \geq 2$. \end{proof}

\vspace{.15in}

\noindent {\bf Definition 2:} A proper rational number $r$ is said to be in
standard form if it is written as $r = \frac{c}{b}$, where $c$ and $b$ are
relatively prime integers and $b \geq 2$

\section{The reciprocal of a proper rational}

We state the following result without proof.  We invite the interested reader tofill in the details.

\begin{theorem}  Let $r = \frac{c}{b}$ be a proper rational in standard form.  

\begin{enumerate}
\item[(i)] If $|c|=1$, the reciprocal $\frac{1}{r} = \frac{b}{c}$ is an
  integer.

\item[(i)] If $c \geq 2$, the reciprocal $\frac{1}{r}$ is a proper positive
  rational.

\item[(iii)]  If $|c|\geq 2$ and $c < 0$, the reciprocal $\frac{1}{r}$ is
  negative proper rational with the standard form being $\frac{1}{r} =
  \frac{d}{|c|}$, where $d=-b$.
\end{enumerate}
\end{theorem}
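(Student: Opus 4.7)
The plan is to verify each of the three conclusions by direct inspection, using that the reciprocal of $r = c/b$ is $1/r = b/c$, and then checking the relevant conditions from Definitions 1 and 2.

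For part (i), I would observe that $|c|=1$ forces $c=\pm 1$, so $b/c = \pm b \in \mathbb{Z}$, settling the integer claim immediately.

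For part (ii), since $c \geq 2$ and $b \geq 2$ are both positive, the quotient $b/c$ is positive. The coprimality $(b,c)=1$ together with $c \geq 2$ shows that $b/c$ is already presented as a fraction with coprime numerator and denominator and denominator $\geq 2$; by Definition 2 it is a proper rational. For an explicit non-integer check I would argue separately: if $b/c$ were an integer, then $c \mid b$, which with $(b,c)=1$ forces $c=1$, contradicting $c\geq 2$.

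For part (iii), the decisive manipulation is to absorb the sign of $c$ into the numerator: writing $1/r = b/c = -b/|c| = d/|c|$ with $d=-b$. I would then verify the standard-form conditions: $|c|\geq 2$ is given, and $(d,|c|) = (-b,|c|) = (b,c) = 1$ by the sign-invariance of the greatest common divisor. Since $d=-b<0$ while $|c|>0$, the reciprocal is negative, and the same divisibility argument as in (ii) shows it cannot be an integer.

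The main obstacle here is not mathematical depth --- every step is a direct application of the definitions --- but bookkeeping with signs, together with the requirement in Definition 2 that the denominator in ``standard form'' be a positive integer $\geq 2$. In particular, in case (iii) one must not forget to shift the negative sign from the denominator $c$ to the numerator before declaring the representation $d/|c|$ to be the standard form.
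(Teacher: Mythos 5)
Your verification is correct: the paper actually states this theorem without proof (``We invite the interested reader to fill in the details''), and your case-by-case check supplies exactly the intended details --- the sign analysis in (i), the coprimality argument showing $b/c$ is a non-integer in (ii), and the transfer of the sign from denominator to numerator to obtain the standard form $d/|c|$ with $d=-b$ in (iii). Nothing is missing; the explicit non-integer check via $c \mid b$ and $(b,c)=1$ is the one step a careless reader might omit, and you include it.
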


\section{An obvious property}

Is the sum of a proper rational and an integer always a proper rational?  The
answer is a rather obvious yes.

\begin{theorem}  Suppose that $r = \frac{c}{b}$ is a proper rational in
  standard form; and $d$ and integer.  Then the sume $r+d$ is a proper
  rational.
\end{theorem}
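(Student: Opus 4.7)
The plan is to reduce $r+d$ to a single fraction over the common denominator $b$ and then argue by contradiction (or directly via gcd) that this fraction cannot collapse to an integer. Explicitly, I would write
\[
r + d \;=\; \frac{c}{b} + d \;=\; \frac{c + bd}{b},
\]
and from here the goal is to verify that this quotient meets the criteria of Definition~2, i.e., that it is a rational whose denominator in lowest terms is at least $2$.

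The first key step is to observe that $\gcd(c+bd,\,b) = \gcd(c,b)$, because $b$ divides the summand $bd$ and gcd is invariant under adding multiples of one argument to the other. Since $r$ is in standard form we are given $(c,b)=1$, so $(c+bd,\,b)=1$ as well. The second key step is to note that $b \geq 2$ is unchanged. Combining these two observations, $(c+bd)/b$ is already in standard form with denominator $\geq 2$, hence by Definition~2 it is a proper rational.

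Alternatively, and perhaps more in the spirit of the earlier proofs in this paper, I would argue by contradiction: suppose $r+d$ were an integer $k$. Then $c+bd = kb$, so $c = b(k-d)$, which forces $b \mid c$. Combined with $(c,b)=1$ this yields $b=1$, contradicting $b \geq 2$. Either route reaches the same conclusion.

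There is no genuine obstacle here; the statement is essentially the observation that $d \in \mathbb{Z}$ acts by a translation that preserves the set of non-integer rationals. The only thing requiring mild care is being explicit about \emph{why} $(c+bd,b)=1$ — one must invoke the elementary lemma that $\gcd(x,y) = \gcd(x+ky,y)$ for any integer $k$, or else carry out the short contradiction argument above. I would choose the contradiction route for readability, since it directly uses the hypotheses $(c,b)=1$ and $b \geq 2$ and makes transparent where the assumption that $r$ is proper is being used.
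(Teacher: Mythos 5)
Your proposal is correct, but it takes a more computational route than the paper does. The paper's entire proof is: if $r+d=i$ for some integer $i$, then $r=i-d$ is an integer (integers being closed under subtraction), contradicting the definition of a proper rational. Notice that this uses \emph{only} Definition 1 — it never touches the standard form, $(c,b)=1$, or $b\geq 2$ at all, so the hypothesis that $r$ is written in standard form is actually superfluous to the argument. Your contradiction route ($c+bd=kb$ forces $b\mid c$, hence $b=1$, contradicting $b\geq 2$) reaches the same conclusion but routes the argument through the fraction representation, which is unnecessary work here; your closing remark that this route ``makes transparent where the assumption that $r$ is proper is being used'' is slightly off the mark, since the properness of $r$ enters most cleanly through the bare definition, not through the coprimality data. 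That said, your first route (via $\gcd(c+bd,b)=\gcd(c,b)=1$) does buy something the paper's proof does not: it exhibits the standard form of $r+d$ explicitly as $\frac{c+bd}{b}$, which is a mildly stronger and potentially reusable conclusion. Both of your arguments are valid; the paper's is simply the minimal one.
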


\begin{proof}  If, to the contrary, $r+d=i$, for some $i \in {\mathbb Z}$, then $r
  = i-d$, an integer contradicting the fact that $r$ is a proper
  rational. \end{proof} 

\section{A lemma from number theory}

We will make repeated use of the very well known, and important, lemma below.
For a proof of this lemma, see reference \cite{1}.  It can be found in just
about every elementary number theory book.

\begin{lemma}  (Euclid's lemma)

\begin{enumerate}
\item[(i)]  (Standard version) Let $m,n,k$ be positive integers such that $m$
  is a divisor of the product $n\cdot k$; and suppose that $(m,n)=1$.  Then $m$ is a
  divisor of $k$.

\item[(ii)]  (Extended version)  Let $m,n,k$ be non-zero integers such that
  $m|nk$ and $(m,n)=1$.  Then $m|k$.
\end{enumerate}
\end{lemma}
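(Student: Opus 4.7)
The plan is to prove part (i) via Bézout's identity, then derive part (ii) from (i) by passing to absolute values. I would begin part (i) by invoking Bézout: since $(m,n)=1$, there exist integers $x, y$ (not necessarily positive) such that $mx + ny = 1$. Multiplying both sides by $k$ yields $mkx + nky = k$. The term $mkx$ is plainly divisible by $m$, and since by hypothesis $m \mid nk$, the term $nky$ is also divisible by $m$. Therefore $m$ divides the left side, hence $m \mid k$, as desired.

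For part (ii), the strategy is to reduce to the already-established part (i) by observing that divisibility and the gcd depend only on absolute values. Given non-zero integers $m, n, k$ with $m \mid nk$ and $(m,n)=1$, I would note that $|m|, |n|, |k|$ are positive integers, that $|m| \mid |nk| = |n|\cdot|k|$, and that $(|m|,|n|) = (m,n) = 1$. Part (i) then gives $|m| \mid |k|$, which is equivalent to $m \mid k$.

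The main obstacle, to the extent there is one, is justifying Bézout's identity itself, which is the genuine content hidden inside Euclid's lemma. I would either cite it as a standard result from reference~\cite{1}, or sketch the usual derivation: the set $S = \{ms+nt : s,t \in \mathbb{Z}\} \cap \mathbb{Z}_{>0}$ is non-empty, its least element $g$ divides both $m$ and $n$ (by the division algorithm applied to any element of $S$), and $g$ is itself a $\mathbb{Z}$-linear combination of $m$ and $n$, so any common divisor of $m,n$ divides $g$; hence $g=(m,n)=1$ is expressible as $mx+ny$.

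A small secondary point worth being careful about in part (ii) is the convention that $(m,n)$ for possibly negative integers is taken to mean $(|m|,|n|)$, so that the hypothesis $(m,n)=1$ makes sense and transfers cleanly to the positive case. Once that convention is fixed, the reduction is entirely mechanical and no further work is required.
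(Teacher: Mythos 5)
Your proof is correct, but note that the paper itself offers no proof of this lemma at all: it explicitly states the lemma without proof and defers to reference~\cite{1}, remarking that it can be found in essentially every elementary number theory text. So there is nothing in the paper to compare against line by line. What you have written is the standard Bézout-based argument: part (i) follows by multiplying $mx+ny=1$ through by $k$ and observing that $m$ divides both resulting terms, and part (ii) reduces to part (i) by passing to absolute values, using that divisibility and the gcd are insensitive to sign. Both steps are sound, your sketch of Bézout's identity via the least positive element of $\{ms+nt\}$ is the usual one, and your remark about the sign convention for $(m,n)$ is exactly the small point that makes the reduction in (ii) legitimate. In effect you have filled in the detail the paper outsources to its reference, which is more than the paper asks of you but entirely consistent with it.
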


\section{A slightly less obvious property}

When is the product of a proper rational with an integer, an integer? A proper
rational?

\begin{theorem}  Let $r = \frac{c}{b}$ be a proper rational in standard form
  and $i$ an integer.

\begin{enumerate}
\item[(a)]  The product $r\cdot i$ is an integer if, and only if, $b|i$.

\item[(b)]  The product $r \cdot i$ is a proper rational if, and only if, $b$
  is not a divisor of $i$.
\end{enumerate}
\end{theorem}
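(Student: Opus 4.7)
The strategy for (a) is to write $r\cdot i = \frac{ci}{b}$ and to exploit the coprimality $(c,b)=1$ by invoking Euclid's lemma. I would first dispose of the easy direction: if $b\mid i$, then $i = bk$ for some integer $k$, hence $r\cdot i = ck\in\mathbb{Z}$.

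For the converse, suppose $r\cdot i$ is an integer, say $r\cdot i = m$. Cross-multiplying gives $ci = bm$, so $b\mid ci$. Here I would split into two cases depending on whether $i$ is zero. If $i = 0$, then $b\mid 0$ trivially, so there is nothing to prove. If $i\neq 0$, then all three of $b,c,i$ are nonzero (recall $b\geq 2$ by standard form and $c\neq 0$ because $r$ is a proper rational), and together with $(b,c)=1$ this is exactly the hypothesis of the extended form of Euclid's lemma (Lemma 1(ii)), which yields $b\mid i$.

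Part (b) should then follow almost for free from part (a). The product $r\cdot i$ is a rational number, and by Definition 1 every rational number is either an integer or a proper rational, with the two alternatives mutually exclusive. Hence $r\cdot i$ fails to be a proper rational iff it is an integer, which by (a) is equivalent to $b\mid i$; contrapositively, $r\cdot i$ is a proper rational iff $b\nmid i$.

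I do not anticipate any serious obstacle. The only small care needed is choosing the extended version (ii) of Euclid's lemma, since $c$ or $i$ may be negative, and handling the $i = 0$ case by hand because Lemma 1(ii) presumes nonzero integers. Everything else is bookkeeping.
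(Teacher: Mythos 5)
Your proposal is correct and follows essentially the same route as the paper: the easy direction by direct substitution, and the converse by cross-multiplying to get $b \mid ci$ and applying Euclid's lemma with $(b,c)=1$, with part (b) obtained as the logical complement of part (a). If anything, you are slightly more careful than the paper, which does not single out the $i=0$ case or specify that the extended version of Lemma 1 is the one needed when $c$ or $i$ may be negative.
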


\begin{proof} (b) This part is logically equivalent to
  part (a). 

\noindent (a)  If $b$ divides $i$, then $i = b \cdot q$, an integer.

\noindent so we have $r \cdot i = \frac{cbq}{b} = c\cdot q$, an integer.  Now
the converse.  Suppose that $r\cdot i$ is an integer $t$:  $r\cdot i = t$,
which yields,

\begin{equation}
c \cdot i = b \cdot t \label{E1}
\end{equation}

\noindent Since $r$ is a proper rational, $(b,c)=1$ by defintion.  Equation
(\ref{E1}) shows that $b|c\cdot i$; and since $(b,c) = 1$.  Lemma 1 implies
that $b$ must divide $i$.  We are done.  
\end{proof}

\section{The sum of two proper rationals}

An interesting equation arises.  When is the sum of two proper rationals also
a proper rational?  When is it an integer?  There is no obvious answer here.

\begin{theorem}  Let $r_1 = \frac{c_1}{b_1}$ and $r_2 = \frac{c_2}{b_2}$ be
  proper rationals in standard form.  Then,

\begin{enumerate}
\item[(i)]  The sum $r_1 + r_2$ is an integer if, and only if, $b_1 = b_2$ and
  $b_1$ is a divisor of the sum $c_1+c_2$.

\item[(ii)]  The sum $r_1 + r_2$ is a proper rational if, and only if,
  either $b_1 \neq b_2$ or $b_1 = b_2$ but with $b_1$ not being a divisor of
  $c_1 + c_2$.
\end{enumerate}
\end{theorem}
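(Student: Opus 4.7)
The plan is to observe first that part (ii) is the logical negation of part (i): the sum of two rationals is always rational, so it is either a proper rational or an integer, and these cases are mutually exclusive. Hence proving (i) as an ``if and only if'' automatically yields (ii).

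For the easy direction of (i), I would assume $b_1 = b_2 = b$ and $b \mid (c_1 + c_2)$, and then just compute $r_1 + r_2 = \frac{c_1 + c_2}{b}$ to conclude that the sum is an integer.

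The substantive direction is the converse. I would assume $r_1 + r_2 = n$ for some $n \in \mathbb{Z}$, put the sum over the common denominator $b_1 b_2$, and cross-multiply to obtain the integer identity
\[
c_1 b_2 + c_2 b_1 = n\, b_1 b_2.
\]
Rearranging gives $c_1 b_2 = b_1(n b_2 - c_2)$, so $b_1 \mid c_1 b_2$. Since $r_1$ is in standard form, $(b_1, c_1) = 1$, and because the $c_i$ may be negative I will invoke the extended version of Euclid's lemma (Lemma 1(ii)) to conclude $b_1 \mid b_2$. By the symmetric argument $b_2 \mid b_1$, and since both $b_1, b_2$ are positive integers $\geq 2$, this forces $b_1 = b_2$. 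With the common denominator established, $r_1 + r_2 = \frac{c_1 + c_2}{b_1}$, and this being an integer is exactly the statement $b_1 \mid (c_1 + c_2)$.

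I expect the only real obstacle to be the careful application of the extended Euclid's lemma, since the numerators $c_1, c_2$ are permitted to be negative in the standard form of a proper rational; the rest of the argument is a short symmetric divisibility chase, and part (ii) then follows for free by negation.
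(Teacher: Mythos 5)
Your proposal is correct and follows essentially the same route as the paper's own proof: the easy direction by direct computation, the converse by cross-multiplying to $c_1b_2 + c_2b_1 = nb_1b_2$, isolating $c_1b_2 = b_1(nb_2 - c_2)$, applying Euclid's lemma with $(b_1,c_1)=1$ to get $b_1 \mid b_2$, then the symmetric divisibility to force $b_1 = b_2$, with part (ii) obtained as the logical negation of part (i). Your explicit remark that the \emph{extended} version of Lemma 1 is the one needed (because the numerators may be negative) is a detail the paper glosses over, and is a worthwhile clarification.
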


\begin{proof}  (ii) This part is logically  equivalent to part (i).

\noindent (i) If $b_1=b_2$ and $b_1|(c_1+c_1)$, then $r_1 + r_2 =
\frac{c_1+c_2}{b_1}$, is obviously an integer.  Next, let us prove the
converse statement.  

Suppose that $r_1+r_2 = i$, an integer.  Some routine algebra produces

\begin{equation}
c_1b_2 + c_2b_1 = i b_1 b_2 \label{E2} 
\end{equation}

\noindent or equivalently

\begin{equation}
c_1b_2 = b_1(ib_2-c_2) \label{E3}.
\end{equation}

According to (\ref{E3}), $b_1|c_1b_2$; and since $(b_1,c_1)=1$, Lemma 1
implies that $b_1|b_2$.  A similar argument, using equation (\ref{E2}), once
more establishes that $b_2|b_1$.  Clearly, since the two positive integers
$b_1$ and $b_2$ are divisors of each other, they must be equal; $b_1 =b_2$
(an easy exercise in elementary number theory).  From $b_1 = b_2$ and
(\ref{E2}), we obtain $c_1+c_2 = i\cdot b_i$; and thus it is clear that
$b_1|(c_1+c_2)$. \end{proof}

\section{The product of two proper rationals}

\begin{theorem} Let $r_1 = \frac{c_1}{b_1}$ and $r_2 = \frac{c_2}{b_2}$ be
  proper rationals in standard form.

\begin{enumerate}
\item[(a)]The product $r_1r_2$ is an integer if, and only if, $b_1|c_2$ and
  $b_2|c_1$.

\item[(b)] The product $r_1r_2$ is a proper rational if, and only if, $b_1$ is
  not a divisor of $c_2$; or $b_2$ is not a divisor of $c_1$.
\end{enumerate}
\end{theorem}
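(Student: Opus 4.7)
The plan is to note that part (b) is the logical negation of part (a) (since any rational is either an integer or a proper rational, and each $r_i$ being proper rational makes $r_1 r_2$ a rational), so I only need to give a careful argument for (a), consisting of the two implications.

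For the ``if'' direction of (a), I would assume $b_1 \mid c_2$ and $b_2 \mid c_1$, write $c_2 = b_1 k$ and $c_1 = b_2 m$ for integers $k, m$, and then simply compute
\[
r_1 r_2 \;=\; \frac{c_1 c_2}{b_1 b_2} \;=\; \frac{(b_2 m)(b_1 k)}{b_1 b_2} \;=\; m k,
\]
which is manifestly an integer.

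For the ``only if'' direction, I would suppose $r_1 r_2 = t$ for some integer $t$ and clear denominators to obtain the key equation
\begin{equation}
c_1 c_2 \;=\; t\, b_1 b_2. \label{Eprod}
\end{equation}
Equation \eqref{Eprod} shows that $b_1 \mid c_1 c_2$. Since $r_1$ is in standard form, $(b_1, c_1) = 1$, so Euclid's lemma (Lemma 1, in its extended version to accommodate possibly negative $c_2$) forces $b_1 \mid c_2$. By an entirely symmetric argument using $(b_2, c_2) = 1$, the same equation yields $b_2 \mid c_1$. This closes the equivalence and hence, by contrapositive, proves (b) as well.

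There is no real obstacle here: the only subtle point worth flagging is that $c_1$ or $c_2$ may be negative (since the standard form only requires $b_i \ge 2$ and $(c_i, b_i) = 1$), which is precisely why the extended version of Euclid's lemma stated in Lemma 1(ii) is the one to invoke rather than the positive-integer version in Lemma 1(i). Everything else is a direct translation of divisibility between numerators and denominators.
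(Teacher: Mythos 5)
Your proposal is correct and follows essentially the same route as the paper's proof: the forward direction by direct substitution $c_2=b_1k$, $c_1=b_2m$, and the converse by clearing denominators to $c_1c_2=tb_1b_2$ and applying Euclid's lemma twice with the coprimality conditions $(b_1,c_1)=1=(b_2,c_2)$, with part (b) obtained as the logical negation of part (a). Your explicit remark that the extended (signed) version of Lemma 1 is needed because $c_1,c_2$ may be negative is a small point of care that the paper leaves implicit.
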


\begin{proof} (b) This part is logically equivalent to part (a).

\noindent (a) Suppose that $b_1|c_2$ and $b_2|c_1$; then $c_2=b_1a$ and
$c_1=b_2d$ where $a$ and $d$ are (non-zero) integers.

We have $r_1r_2 = \frac{c_1c_2}{b_1b_2} = \frac{adb_1 b_2}{b_1b_2} = ad$, an
integer.  

Conversely, suppose that $r_1r_2=i$, an integer.  Then

\begin{equation}
c_1c_2 = ib_1b_2 \label{E4}
\end{equation}

Since $(b_1,c_1) = 1 = (c_2,b_2)$, (\ref{E4}), in conjunction with Lemma 1,
imply that $b_1|c_2$ and $b_2|c_1$.  We are done.  \end{proof}

\section{ One more result and its corollary}

In Theorem 4 part (i), gives us the precise conditions for the sum of two
proper rationals to be an integer. Likewise, Theorem 5 part (a) gives us the
exact conditions for the product of two proper rationals to be an
integer. Naturally, the following question arises.  Can we find two proper
rational numbers whose sum is an integer; and also whose product is an
integer?  Theorem 7 provides an answer in the negative. Theorem 7 is a direct
consequence of Theorem 6 below.

\begin{theorem} If both the sum and the product of two rational numbers are 
integers, then so are the two rationals, integers.
\end{theorem}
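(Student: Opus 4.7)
The plan is to argue by contradiction, reducing everything to the case where both rationals are proper and then invoking Theorems~4 and~5.

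First, suppose the theorem fails, so at least one of $r_1, r_2$ is not an integer; without loss of generality say $r_1$ is a proper rational. I would then observe that $r_2$ must also be a proper rational: writing $r_2 = (-r_1) + (r_1 + r_2)$ expresses $r_2$ as a proper rational plus an integer, so Theorem~2 forces $r_2$ to be a proper rational as well. This reduces the problem to showing that two proper rationals cannot simultaneously have integer sum and integer product.

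Next, put $r_1 = c_1/b_1$ and $r_2 = c_2/b_2$ in standard form, so $(c_i,b_i)=1$ and $b_i \geq 2$. Since $r_1+r_2$ is an integer, Theorem~4(i) gives $b_1=b_2$ (call the common denominator $b$) together with $b \mid (c_1+c_2)$. Since $r_1 r_2$ is an integer, Theorem~5(a) gives $b \mid c_2$ and $b \mid c_1$. But $(b,c_1)=1$ combined with $b \mid c_1$ forces $b \mid 1$, i.e.\ $b=1$, contradicting $b \geq 2$.

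The main obstacle, such as it is, is really just the initial bookkeeping: one must remember that the statement concerns arbitrary rationals (not necessarily proper), so the reduction via Theorem~2 to the all-proper case is what makes Theorems~4 and~5 applicable. Once that reduction is in place, the contradiction between $b \mid c_1$ and $(b,c_1)=1$ closes the argument cleanly.
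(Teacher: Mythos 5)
Your proof is correct and follows essentially the same route as the paper: reduce to the case where both numbers are proper rationals, pass to standard forms, and use Euclid's lemma (via the divisibility conditions) to force the common denominator to be $1$, a contradiction. The only difference is organizational --- you invoke Theorems~4(i) and~5(a) directly, whereas the paper re-derives those divisibility facts from equations (6) and (7) inside the proof; the substance is identical.
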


\begin{proof} Let $r_1,r_2$ be the two rationals, and suppose that 

\begin{equation} \left\{\begin{array}{rcl} r_1+r_2 & = & i_1\\
r_1r_2 & = & i_2\\
i_1,i_2 \in {\mathbb Z} \end{array}\right\} \label{E5}
\end{equation}

\noindent If either of $r_1,r_2$ is an integer, then the first equation in
(\ref{E5}) implies that the other one is also an integer.  
So  we are done in this case.  So, assume that neither of $r_1,r_2$ is an
integer; which means that they are both proper rationals.  Let then $r_1 =
\frac{c_1}{b_1},\ r_2 = \frac{c_2}{b_2}$ be the standard forms of $r_1$ and
$r_2$.  That is, $(c_1,b_1) = 1 = (c_2,b_2),\ b_1 \geq 2,\ b_2 \geq 2$ and, of
course, $c_1c_2 \neq 0$.

Combining this information with (\ref{E5}), we get 

\begin{equation} \left\{ \begin{array}{rcl} c_1b_2 + c_2b_1 & = & i_1 b_1b_2\\
c_1c_2 & = & i_2b_1b_2 \end{array}\right\} \label{E6}
\end{equation}

From the first equation in (\ref{E6}) we obtain

$$c_1b_2 = b_1(i_1b_2-c_2),
$$

\noindent which shows that $b_1|c_1b_2$.

This, combined with $(c_1,b_1)=1$ and Lemma 1 allow us to deduce that
$b_1|b_2$.  Similarly, using the first equation in (\ref{E6}), we infer that
$b_2|b_1$ which implies $b_1=b_2$.  Hence, the second equation of (\ref{E6})
gives,

\begin{equation}
c_1c_2 = i_2 b_1^2 \label{E7}
\end{equation}

By virtue of $(b_1,c_1)=(b_1,c_2) = 1$, equation (\ref{E7}) implies $b_1=1;\
b_1=b_2 = 1$.  Therefore $r_1$ and $r_2$ are integers.  \end{proof}

We have the immediate corollary.

\begin{theorem}  There exist no two proper rationals both of whose sum and
  product are integers.\end{theorem}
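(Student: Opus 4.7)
The plan is to derive this statement as an immediate corollary of Theorem 6 via a one-line contrapositive argument. Since the author has already done the heavy lifting in Theorem 6 (showing that integrality of both the sum and the product forces each rational to be an integer), essentially no new work is required here.

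First I would assume, for contradiction, that two proper rationals $r_1, r_2$ exist whose sum $r_1 + r_2$ and product $r_1 r_2$ are both integers. Then, since $r_1$ and $r_2$ are in particular rational numbers, the hypothesis of Theorem 6 is satisfied. Applying Theorem 6 directly yields that $r_1$ and $r_2$ are themselves integers.

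Next I would invoke Definition 1: a proper rational is a rational which is not an integer. Hence $r_1$ and $r_2$ cannot simultaneously be proper rationals and integers, giving the desired contradiction. Alternatively, one can phrase this non-contradictively as the contrapositive of Theorem 6: if at least one of $r_1, r_2$ fails to be an integer (i.e., is a proper rational), then at least one of $r_1 + r_2$ or $r_1 r_2$ must fail to be an integer; so a fortiori it cannot happen that both are proper rationals while both $r_1+r_2$ and $r_1 r_2$ are integers.

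There is really no main obstacle here, since Theorem 6 does all the number-theoretic work (the use of Euclid's Lemma to force $b_1 = b_2$ and then $b_1 = b_2 = 1$). The only thing to be careful about is to make sure the logical reduction is stated cleanly, so that the reader sees Theorem 7 literally as the contrapositive of Theorem 6 restricted to the case where neither rational is an integer.
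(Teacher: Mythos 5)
Your proposal is correct and matches the paper exactly: the paper presents this statement as an immediate corollary of Theorem 6, which is precisely the contradiction/contrapositive reduction you describe. No further comment is needed.
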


\section{A closing remark}

Theorem 7 can also be proved by using the well known Rational Root Theorem for
polynomials with integer coefficients.  The Rational Root Theorem implies that
if a monic (i.e., leading coefficient is $1$) polynomial with integer
coefficients has a rational root that root must be an integer.   Every
rational of such a monic polynomia must be an integer  (equivalently, each of
its real roots, if any, must be either an irrational number or an integer).
Thus, in our case, the rational numbers $r_1$ and $r_2$ are the roots of the
monic trinomial, $t(x)= (x-r_1)(x-r_2) = x^2 - i_1x + i_2$; a monic quadratic
polynomial with integer coefficients $-i_1$ and $i_2$.  Hence, $r_1$ and $r_2$
must be integers.

For more details, see reference \cite{1}.

\end{document}